\documentclass[preprint,12pt,authoryear]{elsarticle}

\makeatletter
\def\ps@pprintTitle{%
  \let\@oddhead\@empty
  \let\@evenhead\@empty
  \def\@oddfoot{\hfill\thepage\hfill}%
  \let\@evenfoot\@oddfoot}
\makeatother

\usepackage{amsmath,amssymb,amsthm}
\usepackage{booktabs}
\usepackage{setspace}

\newtheorem{theorem}{Theorem}
\newtheorem{proposition}{Proposition}
\newtheorem{remark}{Remark}

\begin{document}

\doublespacing

\begin{frontmatter}

\title{Strategic Interview Positioning under Uncertain Self-Rank in the Secretary Problem}

\author[uas]{Jos\'e A. Islas\corref{cor1}}
\cortext[cor1]{Corresponding author}
\affiliation[uas]{organization={Facultad de Ciencias F\'isico-Matem\'aticas, Universidad Aut\'onoma de Sinaloa},
            city={Culiac\'an}, state={Sinaloa}, postcode={80013}, country={Mexico}}
\ead{islas.jose@uas.edu.mx}
\begin{abstract}
Under a truncated geometric self-rank prior, we study interview positioning in the secretary problem. A unique critical parameter separating first-eligible and final optimality exists exactly when $r(r+1)>n-1$; if $r_n/n\to\alpha$, the critical parameter converges to $(1-\alpha)/(1-\alpha+\alpha^2)$.
\end{abstract}

\begin{keyword}
secretary problem \sep optimal stopping \sep interview positioning \sep rank uncertainty \sep truncated geometric distribution \sep threshold transition
\end{keyword}

\end{frontmatter}

\doublespacing

\section{Introduction}

The classical secretary problem asks for a stopping rule maximizing the
probability of selecting the best applicant in random order
\citep{GilbertMosteller1966,Chow1964}. \citet{Glass2012} shifted the
viewpoint to the applicant: given the employer's rule, which interview
position maximizes hiring probability? With known absolute rank and the
employer using the classical optimal threshold, Glass characterizes the
applicant's preferred interview position.

\citet{McGrath2025} formulate a two-stage Stackelberg game in which the
employer strategically chooses a rejection threshold, anticipating the
applicant's subsequent choice of interview position. They also consider an
uninformed applicant who assigns equal probability to all possible ranks. In
our model, by contrast, the employer's threshold $r$ is treated as given rather
than optimized, while the applicant's uncertainty about absolute rank is
described by a truncated geometric prior. Conditional on a fixed absolute rank
$R=j$, the hiring probabilities in our model coincide with those in Glass's
known-rank analysis.

Fix $n\ge3$ and $1\le r\le n-2$. The employer rejects the first $r$
applicants, then accepts the first subsequent record; if no earlier selection
occurs, the final applicant is accepted. Let $R\in\{1,\ldots,n\}$ be the
applicant's absolute rank, with $R=1$ best, and model the applicant's
subjective self-rank prior by
\[
P(R=j)=p_j(q)=\frac{(1-q)q^{j-1}}{1-q^n},
\qquad j=1,\ldots,n,\quad 0<q<1.
\]
The applicant does not observe the realized rank $R$, but knows the parameter
$q$ of this truncated geometric self-rank prior. As $q\uparrow1$ the prior
tends to the uniform distribution, while $q\downarrow0$ concentrates on rank
$1$.

Geometric laws already serve as priors for uncertain discrete horizons in
secretary problems \citep{BrussSamuels1990,Tamaki2013}; here the same tractable
family models uncertainty about discrete rank. Its constant adjacent ratio
$p_{j+1}(q)/p_j(q)=q$ both measures the decay across successive ranks and
orders the family by monotone likelihood ratio. The geometric assumption is
not needed for the two-position reduction; it yields a monotone one-parameter
transition and a closed-form asymptotic boundary.

Section~\ref{sec:conditional} derives the hiring probabilities conditional on
$R=j$. The pointwise two-position comparison, which underlies Glass's
\citeyearpar{Glass2012} known-rank analysis, remains valid after averaging over
any self-rank distribution. The geometric model enters only when these
rank-conditional probabilities are averaged over the unknown rank. When it
exists, let $q_c(n,r)$ denote the finite critical value of the self-rank
parameter $q$ at which the applicant is indifferent between the first eligible
and final positions. To study large problems, suppose that for each $n$ the
employer uses a given rejection threshold $r_n$, and that
\[
\frac{r_n}{n}\to\alpha\in(0,1)
\qquad\text{as }n\to\infty.
\]
Thus, the fraction of applicants rejected before selection becomes possible
converges to the fixed proportion $\alpha$. The thresholds $r_n$ need not be optimal for the classical secretary
problem. The main result shows that, whenever the finite critical values exist,
\[
q_c(n,r_n)\to
q_c(\alpha)=\frac{1-\alpha}{1-\alpha+\alpha^2}.
\]
In particular, if $r_n^*$ denotes the classical optimal threshold, then the
standard secretary result \citep{GilbertMosteller1966} gives
\[
\frac{r_n^*}{n}\to\frac1e.
\]
Therefore, Theorem~\ref{thm:main} yields
\[
q_c(n,r_n^*)\to q_c(1/e)
=\frac{e(e-1)}{e^2-e+1}.
\]

\section{Rank-conditional analysis}
\label{sec:conditional}

There are $n\ge3$ applicants with distinct absolute ranks. Conditional on the
strategic applicant choosing position $k$, the remaining $n-1$ applicants are
uniformly randomly assigned to the remaining positions. Throughout the paper we set $\binom ab=0$ whenever $b>a$.

Let
\[
q_{j,k}
=
P(\text{applicant is hired}\mid R=j,\text{ position }k).
\]
For $k\le r$, $q_{j,k}=0$. For $r<k<n$, the applicant must be a record at
position $k$, which requires all $j-1$ superior applicants to occur after
position $k$. Hence
\[
P(\text{record at }k\mid R=j)
=
\frac{\binom{n-k}{j-1}}{\binom{n-1}{j-1}}.
\]
Conditional on this event, the employer reaches position $k$ precisely when
the best applicant among the first $k-1$ positions lies among the first $r$
positions. By symmetry this has probability $r/(k-1)$, so
\[
q_{j,k}
=
\frac{r}{k-1}
\frac{\binom{n-k}{j-1}}{\binom{n-1}{j-1}},
\qquad r<k<n.
\]
At the first eligible and final positions, respectively,
\[
q_{j,r+1}=\frac{\binom{n-j}{r}}{\binom{n-1}{r}},
\qquad
q_{j,n}=\frac{r}{n-1}.
\]

For a fixed known rank, the two-position reduction underlies Glass's
\citeyearpar{Glass2012} known-rank analysis. The following proposition shows
that the same reduction persists under arbitrary uncertainty about $R$.

\begin{proposition}[Two-position reduction]
\label{prop:two-position}
For every probability distribution on $R$, an optimal interview position can
be chosen from $\{r+1,n\}$. If $P(R=1)>0$, every position
$r+1<k<n$ is strictly worse than $r+1$.
\end{proposition}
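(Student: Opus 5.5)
Fix a distribution $\pi_j=P(R=j)$. Averaging the rank-conditional probabilities gives the overall hiring probability at position $k$,
\[
H(k)=\sum_{j=1}^n \pi_j\, q_{j,k}.
\]
The plan is to prove a pointwise statement: for each fixed rank $j$ and each $r+1<k<n$,
\[
q_{j,k}\le \max\{q_{j,r+1},\,q_{j,n}\},
\]
and moreover $q_{j,k}\le q_{j,r+1}$ with strict inequality when $j=1$. A pointwise max bound does not average well, since $\sum_j\pi_j\max\{a_j,b_j\}$ can exceed $\max\{\sum\pi_ja_j,\sum\pi_jb_j\}$. So the real target is the convex-combination inequality
\[
q_{j,k}\le \lambda_k\, q_{j,r+1}+(1-\lambda_k)\, q_{j,n},
\]
with a weight $\lambda_k\in[0,1]$ that depends on $k$ but \emph{not} on $j$. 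Averaging over $\pi$ then gives $H(k)\le\lambda_kH(r+1)+(1-\lambda_k)H(n)\le\max\{H(r+1),H(n)\}$.

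The natural choice is a bound by the first eligible position alone, i.e.\ $\lambda_k=1$. Write
\[
q_{j,k}=\frac{r}{k-1}\,\frac{\binom{n-k}{j-1}}{\binom{n-1}{j-1}}.
\]
\begin{itemize}
\item For $j=1$ this equals $r/(k-1)$, which is strictly less than $q_{1,r+1}=1$ when $k>r+1$.
\item For general $j$, the ratio of consecutive terms is
\[
\frac{q_{j,k+1}}{q_{j,k}}=\frac{k-1}{k}\cdot\frac{n-k-j+1}{n-k}<1 .
\]
So $q_{j,k}$ is strictly decreasing in $k$ on $r+1\le k\le n-1$ whenever $\binom{n-k}{j-1}>0$, and it is identically zero afterwards.
\end{itemize}
This gives $q_{j,k}\le q_{j,r+1}$ for every $j$ and every $r+1<k<n$, with strict inequality for $j=1$. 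I expect this to be the whole argument: each interior position is dominated rank-by-rank by position $r+1$, so $H(k)\le H(r+1)$. If $\pi_1>0$, the $j=1$ term contributes strictly, so $H(k)<H(r+1)$.

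Two checks remain.
\begin{itemize}
\item \textbf{Position $r+1$ in the general formula.} Substituting $k=r+1$ gives $q_{j,r+1}=\binom{n-r-1}{j-1}/\binom{n-1}{j-1}$. This must agree with the stated $\binom{n-j}{r}/\binom{n-1}{r}$. Both equal the probability that the $j-1$ better applicants all lie after position $r+1$ among $n-1$ slots, so the identity follows by expanding the binomials or by this combinatorial reading.
\item \textbf{Positions $k\le r$.} These give $H(k)=0\le H(n)=r/(n-1)$, so they are dominated trivially.
\end{itemize}
Together these show the optimum over all positions is attained in $\{r+1,n\}$.

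The main obstacle I anticipated was whether the domination is only "per rank up to a max". The monotonicity computation shows it is in fact a clean domination by the single position $r+1$ for $r<k<n$. The final position $n$ is genuinely different: its $q_{j,n}=r/(n-1)$ does not decay in $j$, so it can beat $r+1$ for large ranks. That is why it survives as the second candidate, and the comparison between the two is left to the geometric analysis that follows.
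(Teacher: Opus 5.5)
Your proposal is correct and follows the paper's proof. You dominate each intermediate position rank by rank, $q_{j,k}\le q_{j,r+1}$ for $r+1<k<n$ with strict inequality at $j=1$, then average over the prior and dispose of $k\le r$ trivially. The only cosmetic difference is that you obtain the pointwise inequality from a consecutive-ratio monotonicity argument, whereas the paper compares the two factors directly: $r/(k-1)\le1$ and $\binom{n-k}{j-1}\le\binom{n-r-1}{j-1}$.
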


\begin{proof}
For each fixed rank $j$ and $r+1<k<n$,
\[
q_{j,k}
=
\frac{r}{k-1}
\frac{\binom{n-k}{j-1}}{\binom{n-1}{j-1}}
\le
\frac{\binom{n-r-1}{j-1}}{\binom{n-1}{j-1}}
=
q_{j,r+1},
\]
with strict inequality for $j=1$. This pointwise comparison is the known-rank
ingredient behind the reduction. Averaging over any distribution of $R$
preserves the inequality, while positions $k\le r$ have zero hiring
probability. Hence only $r+1$ and $n$ need be compared; if $P(R=1)>0$, every
intermediate position is strictly worse than $r+1$.
\end{proof}

Thus we write
\[
V_{E,n}(q)
=
\sum_{j=1}^n p_j(q)q_{j,r+1},
\qquad
V_{L,n}
=
\frac{r}{n-1},
\]
for the hiring probabilities at Early ($r+1$) and Late ($n$),
respectively. Since $q_{j,n}=r/(n-1)$ for every rank $j$, averaging over $R$
leaves $V_{L,n}$ unchanged, so Late does not depend on the self-rank
distribution. Under the truncated geometric distribution,
\[
V_{E,n}(q)
=
\frac{1-q}
{(1-q^n)\binom{n-1}{r}}
\sum_{j=1}^{n-r}
\binom{n-j}{r}q^{j-1}.
\]

\section{Finite-$n$ threshold transition}

The truncated geometric law has full support. If $0<q_1<q_2<1$, then
\[
\frac{p_j(q_2)}{p_j(q_1)}
=
C(q_1,q_2)\left(\frac{q_2}{q_1}\right)^{j-1},
\]
where $C(q_1,q_2)>0$ is independent of $j$; hence the ratio is strictly
increasing in $j$. Thus increasing $q$ shifts probability toward larger ranks
in monotone-likelihood-ratio order. Because $q_{j,r+1}$ is nonincreasing and
nonconstant in $j$, $V_{E,n}(q)$ is strictly decreasing on $(0,1)$. Its endpoint limits are
\[
\lim_{q\downarrow0}V_{E,n}(q)=1,
\qquad
\lim_{q\uparrow1}V_{E,n}(q)=\frac1{r+1},
\]
the latter by $\sum_{j=1}^{n}\binom{n-j}{r}=\binom{n}{r+1}$.
Since $V_{L,n}=r/(n-1)$, we obtain:

\begin{theorem}[Finite-$n$ threshold transition]
\label{thm:finite}
Let $n\ge3$ and $1\le r\le n-2$.
If $r(r+1)<n-1$, Early is uniquely optimal for every $q\in(0,1)$.
If $r(r+1)=n-1$, Early is uniquely optimal for every $q\in(0,1)$ and
indifference occurs only in the limiting uniform case $q\uparrow1$.
If
\[
r(r+1)>n-1,
\]
there exists a unique $q_c(n,r)\in(0,1)$ satisfying
\[
V_{E,n}(q_c(n,r))=V_{L,n}.
\]
Early is uniquely optimal for $q<q_c(n,r)$ and Late is uniquely optimal for
$q>q_c(n,r)$. At $q=q_c(n,r)$, Early and Late are the only optimal positions.
\end{theorem}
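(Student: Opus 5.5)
The plan is to compare the strictly decreasing function $V_{E,n}(q)$ with the constant $V_{L,n}=r/(n-1)$, using the facts established just before the statement: $V_{E,n}$ is continuous and strictly decreasing on $(0,1)$, with limits $1$ as $q\downarrow0$ and $1/(r+1)$ as $q\uparrow1$. Continuity is immediate from the closed form, a rational function of $q$ with nonvanishing denominator on $(0,1)$. By Proposition~\ref{prop:two-position}, which applies because the truncated geometric law has $P(R=1)=p_1(q)>0$, every intermediate position $r+1<k<n$ is strictly worse than $r+1$. Positions $k\le r$ have hiring probability $0$, which is strictly below $V_{L,n}=r/(n-1)>0$ since $r\ge1$. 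So only Early and Late can ever be optimal, and the whole question reduces to the sign of $D(q)=V_{E,n}(q)-V_{L,n}$.

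Since $D$ is strictly decreasing, $D(q)>\lim_{s\uparrow1}D(s)=\frac1{r+1}-\frac r{n-1}$ for every $q\in(0,1)$. The sign of this limit is the sign of $(n-1)-r(r+1)$, which gives the case split.

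\textbf{Case $r(r+1)<n-1$.} The limit is positive, so $D>0$ on $(0,1)$, and Early is uniquely optimal.

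\textbf{Case $r(r+1)=n-1$.} The limit is $0$, and strict monotonicity still gives $D(q)>0$ for all $q\in(0,1)$. Equality holds only in the limit $q\uparrow1$, the uniform prior.

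\textbf{Case $r(r+1)>n-1$.} The limit is negative. Also $\lim_{q\downarrow0}D=1-r/(n-1)>0$, because $r\le n-2$. The intermediate value theorem then gives a root, and strict monotonicity makes it unique. Call it $q_c(n,r)$. We get $D>0$ for $q<q_c$ and $D<0$ for $q>q_c$.

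At $q=q_c$ we have $V_E=V_L$. Intermediate positions are strictly worse by Proposition~\ref{prop:two-position}, and positions $k\le r$ give $0<V_L$. Hence Early and Late are exactly the optimal positions.

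The only step needing genuine care is the monotonicity claim that the surrounding text takes for granted. It is the standard fact that MLR ordering implies first-order stochastic dominance, so expectations of a nonincreasing function decrease. Strictness follows because $q_{j,r+1}$ is nonconstant, equal to $1$ at $j=1$ and $0$ for $j>n-r$, and the ratio $p_j(q_2)/p_j(q_1)$ is strictly increasing in $j$. If a direct argument is wanted, I would write
\[
V_{E,n}(q)=\sum_{m\ge1}\bigl(q_{m,r+1}-q_{m+1,r+1}\bigr)P_q(R\le m),
\]
with nonnegative increments, not all zero. Each $P_q(R\le m)=(1-q^m)/(1-q^n)$ is strictly decreasing in $q$ for $m<n$, by the elementary strict monotonicity of $(1-q^m)/(1-q^n)$. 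I expect this to be the main, though mild, obstacle. Everything else is a direct sign analysis.
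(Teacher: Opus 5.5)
Your proof is correct and follows the paper's route: continuity and strict decrease of $V_{E,n}$, the endpoint limits $1$ and $1/(r+1)$ compared with the constant $r/(n-1)$, the intermediate value theorem, and Proposition~\ref{prop:two-position} to rule out all other positions. Two of your steps fill in points the paper only asserts without changing the argument: the summation-by-parts identity with $P_q(R\le m)=(1-q^m)/(1-q^n)$, which proves strict monotonicity directly, and the explicit observation that positions $k\le r$ earn $0<r/(n-1)$.
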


\begin{proof}
The function $V_{E,n}(q)$ is continuous and strictly decreasing on $(0,1)$,
with endpoint limits $1$ and $1/(r+1)$, while
$V_{L,n}=r/(n-1)$ is constant. Therefore an interior crossing exists exactly when
$1/(r+1)<r/(n-1)$, equivalently $r(r+1)>n-1$, and strict monotonicity makes
the crossing unique.
If equality holds, the two values coincide only in the limiting case
$q\uparrow1$; if the inequality is reversed, Early remains strictly better.
Proposition~\ref{prop:two-position} gives the stated uniqueness of the
optimal positions.
\end{proof}

\begin{remark}[Uniform-prior endpoint]
As $q\uparrow1$, the self-rank distribution becomes uniform and
$V_{E,n}\to1/(r+1)$, whereas $V_{L,n}=r/(n-1)$. Thus, in the uniform-prior
case, Late is weakly preferred if and only if
\[
r(r+1)\ge n-1,
\qquad\text{equivalently}\qquad
r\ge\frac{\sqrt{4n-3}-1}{2}.
\]
This boundary coincides exactly with the one \citet{McGrath2025} obtain for
their uninformed applicant, who assigns equal probability to every rank;
there, the same threshold arises from comparing aggregate equilibrium payoffs
summed over all ranks in a game with an endogenously chosen employer threshold,
rather than from an exogenous $r$ and a $q\uparrow1$ limit as here. The
agreement between two structurally different derivations is a useful
cross-check on both results.
\end{remark}

When the critical value exists, cancellation of the factor $1-q_c$ gives the
exact finite-$n$ indifference equation
\begin{equation}
\label{eq:finite}
\sum_{j=1}^{n-r}
\binom{n-j}{r}q_c^{j-1}
=
\binom{n-2}{r-1}
\sum_{m=0}^{n-1}q_c^m.
\end{equation}
For $n=10$ and $r=3$, \eqref{eq:finite} gives
$q_c(10,3)\approx0.908169515$.

\section{Asymptotic critical value}

\begin{theorem}[Asymptotic critical value]
\label{thm:main}
For each $n\ge3$, let $r_n\in\{1,\ldots,n-2\}$ be a given rejection
threshold, and suppose
\[
\frac{r_n}{n}\longrightarrow\alpha,
\qquad \alpha\in(0,1),
\qquad n\to\infty.
\]
Then, for all sufficiently large $n$, there exists a unique
$q_c(n,r_n)\in(0,1)$, and
\[
\boxed{
\lim_{n\to\infty}q_c(n,r_n)
=
q_c(\alpha)
=
\frac{1-\alpha}{1-\alpha+\alpha^2}.
}
\]
For every fixed $q<q_c(\alpha)$, Early is uniquely optimal for all
sufficiently large $n$, whereas for every fixed $q>q_c(\alpha)$, Late is
uniquely optimal for all sufficiently large $n$.
\end{theorem}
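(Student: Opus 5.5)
Existence for large $n$ follows from Theorem~\ref{thm:finite}: since $r_n/n\to\alpha>0$, we have $r_n\to\infty$ and $r_n(r_n+1)/(n-1)\to\infty$. Hence $r_n(r_n+1)>n-1$ eventually, and $q_c(n,r_n)$ exists and is unique.

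The main work is to compute, for each fixed $q\in(0,1)$, the pointwise limit of $V_{E,n}(q)$ and compare it with $\lim V_{L,n}=\alpha$. I write
\[
q_{j,r+1}=\frac{\binom{n-j}{r}}{\binom{n-1}{r}}=\prod_{i=1}^{j-1}\frac{n-r-i}{n-i},
\]
so for fixed $j$ we get $q_{j,r_n+1}\to(1-\alpha)^{j-1}$. Each term is bounded by $p_j(q)\le (1-q)q^{j-1}/(1-q)$, which is summable uniformly in $n$, and $p_j(q)\to(1-q)q^{j-1}$ because $q^n\to0$. Dominated convergence for series then gives
\[
\lim_{n\to\infty}V_{E,n}(q)=\sum_{j\ge1}(1-q)q^{j-1}(1-\alpha)^{j-1}=\frac{1-q}{1-q(1-\alpha)}=:V_E(q).
\]
$V_E$ is strictly decreasing in $q$. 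Solving $V_E(q)=\alpha$ gives $1-q=\alpha-\alpha q+\alpha^2 q$, i.e. $q=(1-\alpha)/(1-\alpha+\alpha^2)=q_c(\alpha)$.

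The last two claims follow directly. For fixed $q<q_c(\alpha)$, we have $V_E(q)>\alpha$, so $V_{E,n}(q)>V_{L,n}$ for large $n$. Proposition~\ref{prop:two-position} (with $P(R=1)>0$) then makes Early uniquely optimal. The case $q>q_c(\alpha)$ is symmetric, giving $V_{L,n}>V_{E,n}(q)\ge q_{j,k}$-averages, and Late is uniquely optimal.

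For the convergence of $q_c(n,r_n)$, I fix $\varepsilon>0$ and take $q_-=q_c(\alpha)-\varepsilon$ and $q_+=q_c(\alpha)+\varepsilon$ inside $(0,1)$. For large $n$ we have $V_{E,n}(q_-)>V_{L,n}>V_{E,n}(q_+)$. Since $V_{E,n}$ is strictly decreasing (established before Theorem~\ref{thm:finite}), the unique crossing satisfies $q_c(n,r_n)\in(q_-,q_+)$. This yields the limit without any uniform-convergence argument.

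The only delicate step is justifying the termwise limit. The domination $q_{j,r+1}\le1$ together with the geometric tail handles this, and the ratio $p_j(q)$ needs $q$ fixed strictly below $1$. That is why the argument works pointwise in $q$ and then uses monotonicity, rather than passing to the limit at the moving point $q_c(n,r_n)$ directly.
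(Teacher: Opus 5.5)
Your proposal is correct and follows the paper's proof essentially step for step. The shared steps are the product formula giving $q_{j,r_n+1}\to(1-\alpha)^{j-1}$, dominated convergence with the bound $q^{j-1}$ yielding $(1-q)/(1-(1-\alpha)q)$, eventual existence via $r_n(r_n+1)>n-1$ and Theorem~\ref{thm:finite}, and the $\varepsilon$-bracketing of $q_c(n,r_n)$ using strict monotonicity of $V_{E,n}$. The only cosmetic difference is that you read off the final optimality claims from the pointwise signs of $V_{E,n}(q)-V_{L,n}$ together with Proposition~\ref{prop:two-position}, whereas the paper deduces them from the convergence $q_c(n,r_n)\to q_c(\alpha)$; both routes are immediate.
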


\begin{proof}
For fixed $j$,
\[
q_{j,r_n+1}
=
\frac{\binom{n-j}{r_n}}{\binom{n-1}{r_n}}
=
\prod_{s=1}^{j-1}\frac{n-r_n-s}{n-s}
\longrightarrow
(1-\alpha)^{j-1}
\]
as $n\to\infty$. Fix $q\in(0,1)$ and define
\[
a_{n,j}
=
\begin{cases}
\dfrac{(1-q)q^{j-1}}{1-q^n}
\dfrac{\binom{n-j}{r_n}}{\binom{n-1}{r_n}},
&1\le j\le n,\\[2mm]
0,&j>n.
\end{cases}
\]
Then $V_{E,n}(q)=\sum_{j\ge1}a_{n,j}$ and
\[
0\le a_{n,j}\le q^{j-1}.
\]
Dominated convergence therefore gives
\[
\lim_{n\to\infty}V_{E,n}(q)
=
(1-q)\sum_{j=1}^{\infty}
[q(1-\alpha)]^{j-1}
=
\frac{1-q}{1-(1-\alpha)q}.
\]
Also,
\[
\lim_{n\to\infty}V_{L,n}
=
\lim_{n\to\infty}\frac{r_n}{n-1}
=
\alpha.
\]
Hence the limiting difference
\[
H(q)
=
\frac{1-q}{1-(1-\alpha)q}-\alpha
\]
is strictly decreasing and has the unique zero
\[
q_c(\alpha)
=
\frac{1-\alpha}{1-\alpha+\alpha^2}.
\]

Since $r_n/n\to\alpha>0$,
\[
r_n(r_n+1)>n-1
\]
for all sufficiently large $n$, so Theorem~\ref{thm:finite} yields a unique
finite zero $q_c(n,r_n)$. Let
\[
H_n(q)=V_{E,n}(q)-V_{L,n}.
\]
For every fixed $q\in(0,1)$, $H_n(q)\to H(q)$. If $\varepsilon>0$ is small
enough that
\[
0<q_c(\alpha)-\varepsilon<q_c(\alpha)+\varepsilon<1,
\]
then
\[
H(q_c(\alpha)-\varepsilon)>0,
\qquad
H(q_c(\alpha)+\varepsilon)<0.
\]
The same signs hold for $H_n$ for all sufficiently large $n$. Since $H_n$ is
strictly decreasing,
\[
q_c(\alpha)-\varepsilon
<
q_c(n,r_n)
<
q_c(\alpha)+\varepsilon.
\]
Thus
\[
\lim_{n\to\infty}q_c(n,r_n)=q_c(\alpha).
\]
The final optimality statements follow from this convergence.
\end{proof}

\section{Relation to Glass at the classical threshold}

For the classical optimal threshold sequence, $r_n^*/n\to1/e$
\citep{GilbertMosteller1966}. The rank-conditional limit used in
Theorem~\ref{thm:main} gives, for fixed $j$,
\[
q_{j,r_n^*+1}\to(1-1/e)^{j-1},
\qquad
q_{j,n}\to1/e.
\]
Hence Early is preferred when $(1-1/e)^{j-1}>1/e$. Since
\[
(1-1/e)^2>1/e,
\qquad
(1-1/e)^3<1/e,
\]
the first eligible position is preferred for $j\le3$ and the final position
for $j\ge4$, recovering Glass's Theorem~4.

\section{Discussion}

The monotonicity argument is not specific to the geometric prior. For any
positive one-parameter self-rank family ordered by monotone likelihood ratio,
$V_{E,n}$ is nonincreasing in the parameter; under strict ordering and
continuity, there is at most one critical value. The geometric family is
especially tractable because its large-$n$ average is explicit, yielding
$q_c(\alpha)$. A natural extension is to identify other self-rank families
with similarly explicit finite or asymptotic critical boundaries.


\begin{thebibliography}{6}
\expandafter\ifx\csname natexlab\endcsname\relax\def\natexlab#1{#1}\fi
\providecommand{\url}[1]{\texttt{#1}}
\providecommand{\href}[2]{#2}
\providecommand{\path}[1]{#1}
\providecommand{\DOIprefix}{doi:}
\providecommand{\ArXivprefix}{arXiv:}
\providecommand{\URLprefix}{URL: }
\providecommand{\Pubmedprefix}{pmid:}
\providecommand{\doi}[1]{\href{http://dx.doi.org/#1}{\path{#1}}}
\providecommand{\Pubmed}[1]{\href{pmid:#1}{\path{#1}}}
\providecommand{\bibinfo}[2]{#2}
\ifx\xfnm\relax \def\xfnm[#1]{\unskip,\space#1}\fi
\bibitem[{Bruss and Samuels(1990)}]{BrussSamuels1990}
\bibinfo{author}{Bruss, F.T.}, \bibinfo{author}{Samuels, S.M.},
  \bibinfo{year}{1990}.
\newblock \bibinfo{title}{Conditions for quasi-stationarity of the {Bayes} rule
  in selection problems with an unknown number of rankable options}.
\newblock \bibinfo{journal}{Annals of Probability} \bibinfo{volume}{18},
  \bibinfo{pages}{877--886}.
\newblock \DOIprefix\doi{10.1214/aop/1176990864}.
\bibitem[{Chow et~al.(1964)Chow, Moriguti, Robbins and Samuels}]{Chow1964}
\bibinfo{author}{Chow, Y.S.}, \bibinfo{author}{Moriguti, S.},
  \bibinfo{author}{Robbins, H.}, \bibinfo{author}{Samuels, S.M.},
  \bibinfo{year}{1964}.
\newblock \bibinfo{title}{Optimal selection based on relative rank}.
\newblock \bibinfo{journal}{Israel Journal of Mathematics} \bibinfo{volume}{2},
  \bibinfo{pages}{81--90}.
\newblock \DOIprefix\doi{10.1007/BF02759948}.
\bibitem[{Gilbert and Mosteller(1966)}]{GilbertMosteller1966}
\bibinfo{author}{Gilbert, J.P.}, \bibinfo{author}{Mosteller, F.},
  \bibinfo{year}{1966}.
\newblock \bibinfo{title}{Recognizing the maximum of a sequence}.
\newblock \bibinfo{journal}{Journal of the American Statistical Association}
  \bibinfo{volume}{61}, \bibinfo{pages}{35--73}.
\newblock \DOIprefix\doi{10.1080/01621459.1966.10502008}.
\bibitem[{Glass(2012)}]{Glass2012}
\bibinfo{author}{Glass, D.B.}, \bibinfo{year}{2012}.
\newblock \bibinfo{title}{The secretary problem from the applicant's point of
  view}.
\newblock \bibinfo{journal}{College Mathematics Journal} \bibinfo{volume}{43},
  \bibinfo{pages}{76--81}.
\newblock \DOIprefix\doi{10.4169/college.math.j.43.1.076}.
\bibitem[{McGrath and Schr{\"o}der(2025)}]{McGrath2025}
\bibinfo{author}{McGrath, T.}, \bibinfo{author}{Schr{\"o}der, M.},
  \bibinfo{year}{2025}.
\newblock \bibinfo{title}{Competitive secretary problem}.
\newblock \bibinfo{journal}{International Journal of Game Theory}
  \bibinfo{volume}{54}, \bibinfo{pages}{1}.
\newblock \DOIprefix\doi{10.1007/s00182-025-00931-9}.
\bibitem[{Tamaki(2013)}]{Tamaki2013}
\bibinfo{author}{Tamaki, M.}, \bibinfo{year}{2013}.
\newblock \bibinfo{title}{Optimal stopping rule for the no-information duration
  problem with random horizon}.
\newblock \bibinfo{journal}{Advances in Applied Probability}
  \bibinfo{volume}{45}, \bibinfo{pages}{1028--1048}.
\newblock \DOIprefix\doi{10.1239/aap/1386857856}.

\end{thebibliography}
\end{document}